\begin{document}
 \righthyphenmin=2

\def \<{{\langle}}
\def\>{{\rangle}}
\def\Z{{\Bbb Z}}
\def\N{{\mathbb N}}
\def\NN{{\mu_1}}
\def\T{{\cal T}}
\def \D{{\cal D}}
\def \C{{\cal C}}
\def \B{{\cal B}}
\def \Di{{\cal D}}
\def \di{{\Bbb D}}
\def \og{{\Bbb O}}
\def \H{{\cal H}}
\def \I{{\cal I}}
\def \Tht{{\Theta}}
\def \F{{\cal F}}
\def \V{{\cal V}}
\def \a{{\alpha}}
\def \l{{\lambda}}
\def \t{{\beta}}
\def \bt{{\beta}}
\def \dt{{\delta}}
\def \De{{\Delta}}
\def \ga{{\gamma}}
\def \kp{{\varkappa}}
\def \sgn{{\operatorname{sgn}}}
\def \R{{\mathbb R}}
\def \CR{{\cal R}}
\def \A {{\cal A}}
\def \a{{\alpha}}
\def \m{{\mu}}
\def \M{{\cal M}}
\def \g{{\gamma}}
\def \bt{{\beta}}
\def \la{{\lambda}}
\def \La{{\Lambda}}
\def \fee{{\varphi}}
\def \e{{\epsilon}}
\def \s{{\sigma}}
\def \Sm{{\Sigma}}
\def \sm{{\sigma_m}}
\def \PLb{{P^\bot_L}}
\def \td{\tilde}
\newcommand{\wh}{\widehat}
\newcommand{\wt}{\widetilde}
\newcommand{\ol}{\overline}
\def \supp{{\operatorname{supp}}}
\def \sign{{\operatorname{sign}}}
\def \sp{{\operatorname{span}}}
\def \csp{{\overline{\operatorname{span}}}}
\def \Prj{{\operatorname{Proj}}}
\def \Dff{{\operatorname{Diff}}}
\def \Dst{{\operatorname{Dist}}}
\def \argmax{{\operatorname{argmax}}}
\def \argmin{{\operatorname{argmin}}}

\def\charf\#1{\chi_{\lower3pt\hbox{$\scriptstyle \#1$}}} %better characteristic f
\catcode`\@=11
\def\NoLogo{\let\logo@\empty}
\catcode`\@=\active \NoLogo

%\newcounter{Lemm}[section]
%\newcommand{\Lm}{\refstepcounter{Lemm}{\sc Ëåììà
%\arabic{section}.\arabic{Lemm}}. }
\newtheorem{Theor}{Theorem}
\newtheorem{Lemm}{Lemma}
\newtheorem{Corollar}{Collary}
\theoremstyle{remark}
\newtheorem{Remar}{Remark}

\begin{center}
{\Large On efficiency of Orthogonal Matching
Pursuit.}\footnote{This research is partially supported by Russian
Foundation for Basic Research
project 08-01-00799 and 09-01-12173} \\
 Eugene Livshitz\footnote{E-mail: evgliv at gmail.com}
\end{center}
\begin{abstract}
We show that if a matrix $\Phi$ satisfies the RIP of order
$[CK^{1.2}]$ with isometry constant $\dt = c K^{-0.2}$ and has
coherence less than $1/(20 K^{0.8})$, then Orthogonal Matching
Pursuit (OMP) will recover $K$-sparse signal $x$ from $y=\Phi x$
in at most $[CK^{1.2}]$ iterations. This result implies that
$K$-sparse signal can be recovered via OMP by $M=O(K^{1.6}\log N)$
measurements.
\end{abstract}

\section{Introduction.}

The emerging theory of Compressed Sensing (CS) has provided a new
framework for signal acquisition \cite{B}, \cite{C}, \cite{D}. Let
us recall some basic concepts of CS. Let $\Phi$ be a $M\times N$
matrix ($M<N$). The basic problem in CS is to construct a stable
and fast algorithm for recovery a signal $x\in\R^d$ that has $K$
non-zero components ($K$-sparse signal) from measurements $y =
\Phi x\in\R^M$ and to determine $(M,N,K)$ for which such
algorithms exist.

E.~Cand\'{e}s and T.~Tao proved that Basic Pursuit (BP)
\begin{equation*}
\wh x (y) = \argmin\{|z|_1 :\ \Phi z = y \}.
\end{equation*}
can provide the exact recovery of arbitrary $K$-sparse $x\in\R^N$
by $M=O(K\log(N/K))$ measurements.

In this article we study signal recovery via Orthogonal Matching
Pursuit(OMP). Although theoretical results for OMP are essentially
worse than for BP, its computational simplicity allows OMP to
achieve very good result in practise~\cite{Tr}.

\vskip 1cm
 \hrule
\smallskip
 {\bf Algorithm:} Orthogonal Matching Pursuit
\smallskip
 \hrule
 \smallskip
 {\bf Input:} $\Phi$, $y$.

{\bf Initialization:} $r^0:=y$, $x^0:=0$, $\La^0=\emptyset$,
$l=0$.

 {\bf Iterations:}
Define
\begin{equation*}
\La^{l+1} := \La^l\cup\argmax_i |\<r^l, \phi_i\>|,
\end{equation*}
\begin{equation*}
x^{l+1} := \argmin_{z:\ \supp(z)\subset \La^{l+1}} \|y -\Phi
z\|\quad r^{l+1} :=y-\Phi x^{l+1}.
\end{equation*}

 If $r_{l+1} = 0$, stop.
Otherwise let $l := l + 1$ and begin a new iteration.

{\bf Output}: If algorithm stops at $l$-th iteration, output is
$\wh x= x_l$.
\smallskip
 \hrule
\vskip 1cm

By $\phi_i$, $1\le i\le N$, we denote the $i$-th column of $\Phi$.
We assume that $\|\phi_i\|=1$, $1\le i\le N$. To formulate results
on recovery via OMP we use two basic properties of matrix $\Phi$.

\begin{itemize}
\item Coherence of $\Phi$
\begin{equation*}
\mu(\Phi):=\sup_{i\ne j}|\<\phi_i,\phi_j\>|.
\end{equation*}
\item Restricted Isometry Property (\cite{CT}). A matrix $\Phi$
satisfies Restricted Isometry Property (RIP) of order $K$ with
isometry constant $\dt\in(0,1)$ if the inequality
\begin{equation*}
(1-\dt)\|x\|^2\le \|\Phi x\|^2\le (1+\dt)\|x\|^2
\end{equation*}
holds for all $K$-sparse $x\in\R^N$.
\end{itemize}

It's well known (see \cite{GMS}, \cite{GN}) that if
\begin{equation*}
\mu(\Phi)<\frac{1}{2K-1}
\end{equation*}
then OMP will recover arbitrary $K$-sparse signal $x$ from $y =
\Phi x$ in exactly $K$ iterartions. The stability of recovery via
OMP in the term of coherence of $\Phi$ has been studied in
\cite{GMS}, \cite{Tr}, \cite{DET}, \cite{DET1}, \cite{TZh},
\cite{L10}. Recently M.~Davenport and M.~Wakin~\cite{DW}, and
E.~Liu and V.N.~Temlyakov \cite{LiuT} showed that
 if $\Phi$ satisfies RIP of order
$K+1$ with isometry constant
\begin{equation*}
\dt=\frac{1}{ 3K^{1/2}}(\cite{DW}),\quad \dt=\frac{1}{
(1+2^{1/2})K^{1/2}}(\cite{LiuT}),
\end{equation*}
 then OMP recovers arbitrary
$K$-sparse signal $x\in\R^N$ in exactly $K$ iterations.

To compare these results we recall estimates on coherence and RIP
for normalized random Bernoulli matrices $\Phi$ (each entry is
$\pm M^{-1/2}$ with probability $1/2$). For rather big $c_\mu$ we
have with high probability that
\begin{equation}\label{intr-mu1}
\mu(\Phi)\le c_\mu M^{-1/2}\log^{1/2} N.
\end{equation}
R.~Baraniuk,  M.~Davenport R.~Devore and M.~Wakin~\cite{BDDW} (see
also earlier B.S.~Kashin's work~\cite{K}) showed that random
Bernoulli matrix $\Phi$ with high probability satisfy RIP of order
$K$ with isometry constant $\dt$ for
\begin{equation}\label{intr-rip}
M = O\left(\frac{K\log(N/K)}{\dt^2}\right).
\end{equation}
Thus both results require $M=O(K^2)$ measurements for recovery of
$K$-sparse signal. The aim of this article to show that OMP can
recover sparse signals by essentially less number of measurements.

\begin{Theor}\label{Th-RIP} There exist absolute constants $C>0$ and $c>0$ such that if $\Phi$ satisfies the RIP of order $[CK^{1.2}]$
 with isometry constant $\dt =cK^{-0.2}$ and has coherence $\mu(\Phi)\le 1/(20 K^{0.8})$,
then for any $K$-sparse $x\in \R^N$, OMP will recover x exactly
from $y = \Phi x$ in at most $[CK^{1.2}]$ iterations.
\end{Theor}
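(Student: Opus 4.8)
The plan is to show that within $[CK^{1.2}]$ iterations OMP selects every index of $T:=\supp(x)$; once $T\subseteq\La^l$ the least–squares step reproduces $x$ exactly (the columns of $\Phi$ indexed by $\La^l$ are linearly independent by the RIP of order $[CK^{1.2}]$, so $\|y-\Phi z\|$ is minimized at $z=x$), hence $r^l=0$ and the algorithm stops. Since $r^l\perp\phi_i$ for every $i\in\La^l$, no index is ever chosen twice, so the number of iterations equals $|\La^l|$. Writing $T_l:=T\setminus\La^l$ for the uncaptured part of the support and $e^l:=x-x^l$, I decompose $e^l=x|_{T_l}+w^l$, where $w^l$ is supported on $\La^l$. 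Using $r^l\perp\phi_i$ on $\La^l$ one gets $w^l=-(\Phi_{\La^l}^*\Phi_{\La^l})^{-1}\Phi_{\La^l}^*\Phi\,x|_{T_l}$, and the RIP together with the restricted orthogonality it implies give the a priori bounds $\|w^l\|\le\frac{\dt}{1-\dt}\|x|_{T_l}\|$ and the two–sided comparison $\|r^l\|^2=\|\Phi e^l\|^2\asymp\|e^l\|^2$; in particular $\|r^l\|\ge\sqrt{1-\dt}\,\|x|_{T_l}\|$, a lower ``floor'' for the residual that stays fixed as long as no new correct index is captured.

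Two estimates drive the selection. First, restricted orthogonality yields $\|\Phi_{T_l}^*r^l\|\ge\frac{1-2\dt}{1-\dt}\|x|_{T_l}\|$, whence $\max_i|\<r^l,\phi_i\>|\ge\max_{i\in T_l}|\<r^l,\phi_i\>|\ge\frac{1-2\dt}{(1-\dt)\sqrt{|T_l|}}\|x|_{T_l}\|$; combined with the greedy inequality $\|r^{l+1}\|^2\le\|r^l\|^2-\max_i|\<r^l,\phi_i\>|^2$ this quantifies the progress of every step. Second, coherence controls which index wins: for $i\in T_l$ one has $|\<r^l,\phi_i\>|\ge|x_i|-\mu\sqrt{K+l}\,\|e^l\|$, while for any $j\notin T\cup\La^l$ one has $|\<r^l,\phi_j\>|\le\mu\sqrt{K+l}\,\|e^l\|$. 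Hence a wrong index can be selected only when $\|x|_{T_l}\|_\infty\le2\mu\sqrt{K+l}\,\|e^l\|$; with $\mu\le1/(20K^{0.8})$, $l\le[CK^{1.2}]$ and $\|e^l\|\asymp\|x|_{T_l}\|$, this forces $|T_l|\gtrsim K^{0.4}$. Equivalently, once fewer than $\sim K^{0.4}$ correct indices remain, every subsequent pick is correct and the process finishes cleanly.

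It remains to bound the number of \emph{incorrect} selections, and this is the main obstacle. The amortized mechanism is that wrong picks cannot persist: during a maximal block of consecutive wrong picks the set $T_l$, the norm $\|x|_{T_l}\|$ and the floor $\sqrt{1-\dt}\|x|_{T_l}\|$ are all frozen, so the residual can drop by at most $\|r^{\mathrm{start}}\|^2-(1-\dt)\|x|_{T_l}\|^2=O(\dt)\|x|_{T_l}\|^2$, while each wrong pick costs at least $\bigl(\frac{1-2\dt}{(1-\dt)\sqrt{|T_l|}}\|x|_{T_l}\|\bigr)^2$; this already caps the block length. The delicate point is sharpening the count to $[CK^{1.2}]$: the crude per–block estimate gives $O(\dt|T_l|)$ wrong picks, summing over the $\le K$ blocks to $O(\dt K^2)$, which exceeds $K^{1.2}$, so the blocks cannot be treated independently with the lossy $\ell_2\!\to\!\ell_\infty$ factor $\sqrt{|T_l|}$. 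I expect the correct argument to interleave the two estimates through a potential/amortization that stratifies $T$ by dyadic magnitude layers, using coherence to guarantee that the currently dominant layer is acquired with few wrong picks while a large coefficient survives, and the near–orthonormality of the selected columns (RIP of the large order $[CK^{1.2}]$) to cap how many wrong columns can simultaneously be strongly correlated with a residual pinned above its floor. Carrying this bookkeeping so that the total selection count stays below $[CK^{1.2}]$—which is precisely what legitimizes every use of the RIP and restricted–orthogonality estimates above—is the technical heart of the proof, after which exactness follows from the reduction in the first paragraph.
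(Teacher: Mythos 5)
Your write-up is an honest sketch rather than a proof: you yourself identify that the crude amortized count of wrong selections gives $O(\dt K^{2})$, which exceeds the target $[CK^{1.2}]$, and you then defer the resolution to an unspecified ``potential/amortization that stratifies $T$ by dyadic magnitude layers.'' That deferred bookkeeping is exactly the content of the theorem, so the argument has a genuine gap at its center. There is no indication that the layer-stratification you gesture at can be made to work with only the per-step greedy decrease $\|r^{l+1}\|^2\le\|r^l\|^2-\max_i|\<r^l,\phi_i\>|^2$ and the coherence dichotomy between correct and incorrect picks; those tools are the ones that naturally yield the known $O(K^2)$-measurement results, and beating them is precisely the point of the paper.

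The paper's actual proof avoids the correct/incorrect-pick dichotomy entirely. It never asks which individual index is selected; instead it imports two black-box convergence rates for OMP --- the DeVore--Temlyakov estimate $\|r^{l}\|\le |x|_1\,l^{-1/2}$ (Theorem A) and the Lebesgue-type inequality $\|r^{2l}\|\le 3\s_l(y,\Phi)$ valid for $l\le 1/(20\mu(\Phi))$ (Theorem B, which is where the coherence hypothesis enters) --- and combines them with the RIP bound $\sum_{i\in\La^l}(z^l_i)^2\le 3\dt\,R(V_0\setminus\La^l)$ on the energy of the ``overshoot'' on already-selected coordinates. The induction then runs a five-level cascade: in a block of iterations one either captures at least $K^{0.8}$ (resp.\ $K^{0.6}$, $K^{0.4}$, $K^{0.2}$, $1$) new support indices within a correspondingly bounded number of steps, or else Theorem A/B force the residual support energy $R_k$ to drop by a constant factor while at least one index is captured. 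Summing the five cases gives the $K^{1.2}$ iteration count. If you want to salvage your route, you would essentially have to reprove Theorems A and B inside your amortization, so I would recommend restructuring around block-wise energy decay over the scales $K^{0.8},K^{0.6},K^{0.4},K^{0.2},1$ rather than around counting wrong picks.
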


Inequalities (\ref{intr-mu1}) and (\ref{intr-rip}) imply that for
rather big absolute constant $C_M>0$
 with high probability normalized random Bernoulli matrix $\Phi$
 with
\begin{equation*}
M = \left[C_M K^{1.6}\log N\right]
\end{equation*}
satisfies condition of Theorem~\ref{Th-RIP}.

Much less is known about the lower estimates.  H.~Rauhut~\cite{R}
proves that if $M\le \wt c K^{3/2}$ then for most random $M\times
N$ matrices there exists a $K$-sparse signal $x\in\R^N$ that can
not be recovered via $K$ iterations of OMP. Moreover, it's
conjectured in~\cite{R} (see also \cite{DM}) that for $M\le \wt
c_n K^{2-1/n}$, $n\in\N$, with high probability there exists a
$K$-sparse signal $x\in\R^N$ that can not be recovered via $K$
iterations of OMP from $y=\Phi x$.

\section{Auxiliary lemmas.}

We use two results on the rate of convergence of Orthogonal Greedy
Algorithm (OMP).

{\bf Theorem A. (R.A.~Devore, V.N.~Temlyakov,~\cite{DT})} {\it
Suppose that $y=\Phi x$. Then for any $l\ge 1$ we have
\begin{equation*}
\|r^{l}\| \le |x|_1 l^{-1/2}.
\end{equation*}
}

{\bf Theorem B. (EL,~\cite{L10})} {\it For any $l$, $1\le l\le
1/(20\mu(\Phi))$ we have
\begin{equation*}
\|r^{2l}\| \le 3\s_l(y,\Phi).
\end{equation*}
}

\noindent For $l\ge 0$ we set
\begin{equation*}
z^l := x - x^l.
\end{equation*}
Then by definition of OMP
\begin{equation}\label{rlPhizl}
r^l = y - \Phi x^l = \Phi x - \Phi x^l = \Phi z^l,\ l\ge 0.
\end{equation}
Assume that
\begin{equation*}
x = (x_1,\ldots, x_N),\quad z^l = (z^l_1,\ldots,z^l_N),\ l\ge 0.
\end{equation*}
Set
\begin{equation}\label{V0}
V_0 = \supp x,\quad \sharp V_0 \le K.
\end{equation}
 By $x|_V$, $V\subset V_0$, denote an element $(\wt
x_1,\ldots,\wt x_n)$ of $\R^N$ such that $\wt x_i = x_i$,  $i\in
V$, and $\wt x_i = 0$, $i\not\in V$. For each $V\subset V_0$ we
define
\begin{equation*}
R(V) = \sum_{i\in V} x_i^2.
\end{equation*}
Let
\begin{equation}\label{Cc}
 C:= 2\times 10^5,\quad c:=
10^{-6},\quad \dt := cK^{-0.2}.
\end{equation}

\begin{Lemm}\label{Lm-RIPsimple}
Suppose that $l+K\le CK^{1.2}$. Then we have
\begin{equation}\label{RIPsimple1}
\sum_{i\in \La^l}(z^l_i)^2\le 3\dt R(V_0\setminus \La^l),
\end{equation}
\begin{equation}\label{RIPsimple2}
R(V_0\setminus \La^l) \le (1+2\dt)\|r^l\|^2.
\end{equation}
\end{Lemm}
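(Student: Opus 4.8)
The plan is to exploit two structural facts: the OMP residual is orthogonal to every column already selected, and $z^l$ is supported on a set small enough for the RIP of order $[CK^{1.2}]$ to apply. First I would record the supports. Since $\supp x = V_0$ and $\supp x^l\subset\La^l$, the vector $z^l=x-x^l$ is supported on $\La^l\cup V_0$. Write $A:=\La^l$ and $B:=V_0\setminus\La^l$; these are disjoint and $\sharp A+\sharp B=\sharp(\La^l\cup V_0)\le l+K\le CK^{1.2}$, so (as $l+K$ is an integer) $\sharp A+\sharp B\le[CK^{1.2}]$ and every vector supported on $A$, on $B$, or on $A\cup B$ is covered by the RIP of order $[CK^{1.2}]$. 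Decompose $z^l=u+v$, where $u,v$ are the restrictions of $z^l$ to $A,B$. Because $x^l_i=0$ for $i\notin\La^l$, one has $z^l_i=x_i$ on $B$, hence $\|v\|^2=\sum_{i\in V_0\setminus\La^l}(z^l_i)^2=R(V_0\setminus\La^l)$, while $\|u\|^2=\sum_{i\in\La^l}(z^l_i)^2$. The minimality defining $x^l$ gives the normal equations $\<r^l,\phi_i\>=0$ for every $i\in A$; since $r^l=\Phi z^l=\Phi u+\Phi v$, summing these against the coefficients of $u$ yields $\|\Phi u\|^2=-\<\Phi v,\Phi u\>$.

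The one genuine tool needed is the standard near-orthogonality consequence of RIP: if $P,Q$ are disjoint with $\sharp P+\sharp Q\le[CK^{1.2}]$ and $p,q$ are supported on $P,Q$, then $|\<\Phi p,\Phi q\>|\le\dt\|p\|\|q\|$. This follows from the polarization identity $\<\Phi p,\Phi q\>=\tfrac14(\|\Phi(p+q)\|^2-\|\Phi(p-q)\|^2)$, the equality $\|p\pm q\|^2=\|p\|^2+\|q\|^2$, and the RIP bounds applied to the $(\sharp P+\sharp Q)$-sparse vectors $p\pm q$, followed by normalization to unit norm.

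To prove (\ref{RIPsimple1}), I combine $\|\Phi u\|^2=-\<\Phi v,\Phi u\>$ with the RIP lower bound $\|\Phi u\|^2\ge(1-\dt)\|u\|^2$ (valid since $\sharp A\le l\le CK^{1.2}$) and the near-orthogonality estimate $|\<\Phi v,\Phi u\>|\le\dt\|u\|\|v\|$ (with $P=A$, $Q=B$). This gives $(1-\dt)\|u\|^2\le\dt\|u\|\|v\|$, so $\|u\|\le\tfrac{\dt}{1-\dt}\|v\|$ and therefore $\sum_{i\in\La^l}(z^l_i)^2=\|u\|^2\le\tfrac{\dt^2}{(1-\dt)^2}\|v\|^2\le 3\dt\,R(V_0\setminus\La^l)$, the last inequality holding because $\dt=cK^{-0.2}$ is so small that $\dt/(1-\dt)^2\le 3$. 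For (\ref{RIPsimple2}) I apply RIP to $z^l$ itself (supported on $A\cup B$): using orthogonality of the decomposition, $\|r^l\|^2=\|\Phi z^l\|^2\ge(1-\dt)\|z^l\|^2\ge(1-\dt)\|v\|^2$, and since $\tfrac{1}{1-\dt}\le 1+2\dt$ for $\dt\le\tfrac12$, we obtain $R(V_0\setminus\La^l)=\|v\|^2\le(1+2\dt)\|r^l\|^2$.

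The main obstacle is not any single estimate—both inequalities come out with enormous room to spare—but the bookkeeping that legitimizes invoking RIP of order exactly $[CK^{1.2}]$: one must check that $\supp z^l$, and each of the blocks entering the near-orthogonality estimate, has cardinality at most $[CK^{1.2}]$, which is precisely where the hypothesis $l+K\le CK^{1.2}$ is used. It is worth noting that coherence plays no role in this particular lemma; only the RIP is needed, and a crude coherence-based bound on $\<\Phi v,\Phi u\>$ would in fact be far too weak here.
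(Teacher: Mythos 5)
Your proof is correct, and for inequality (\ref{RIPsimple1}) it takes a genuinely different route from the paper's. The paper never invokes the normal equations or the near-orthogonality lemma: instead it compares $\|r^l\|^2$ against the competitor $\left\|\Phi\left(x|_{V_0\setminus\La^l}\right)\right\|^2$, using only the minimality of the least-squares step (the vector $x|_{V_0\cap\La^l}$ is supported on $\La^l$, so $\|r^l\|\le\|\Phi(x|_{V_0\setminus\La^l})\|$) together with two direct applications of RIP, to $z^l$ and to $x|_{V_0\setminus\La^l}$; this yields $\sum_{i\in\La^l}(z^l_i)^2\le\left(\frac{1+\dt}{1-\dt}-1\right)R(V_0\setminus\La^l)=\frac{2\dt}{1-\dt}R(V_0\setminus\La^l)\le 3\dt R(V_0\setminus\La^l)$. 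Your argument instead exploits $\<r^l,\phi_i\>=0$ for $i\in\La^l$ plus the polarization consequence of RIP for disjointly supported vectors, and arrives at the quadratically stronger bound $\frac{\dt^2}{(1-\dt)^2}R(V_0\setminus\La^l)$ --- at the cost of having to prove (or import) the near-orthogonality lemma, which the paper's more elementary energy-comparison argument avoids. Your support bookkeeping (including the remark that $l+K\le CK^{1.2}$ forces $\sharp\supp z^l\le[CK^{1.2}]$ since the left side is an integer) is sound, and your derivation of (\ref{RIPsimple2}) coincides with the paper's: both drop to $\sum_{i\in V_0\setminus\La^l}(z^l_i)^2\le\sum_{i=1}^N(z^l_i)^2\le(1-\dt)^{-1}\|r^l\|^2\le(1+2\dt)\|r^l\|^2$.
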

\begin{proof}
It's clear that $|z^l|_0\le |x|_0 + |x^l|_0\le K+l\le CK^{1.2}$,
so by RIP and (\ref{rlPhizl}) we have
\begin{equation}\label{RIPSimpleT1}
(1-\dt)\sum_{i=1}^N (z^l_i)^2 \le \|\Phi z^l\|^2 = \|r^l\|^2\le
(1+\dt)\sum_{i=1}^N (z^l_i)^2.
\end{equation}
On the other hand, using definition of $R(\cdot)$, and RIP for
$x|_{V_0\setminus \La^l}$ we write
\begin{equation*}
\|x|_{V_0\setminus \La^l}\|^2=R(V_0\setminus \La^l),
\end{equation*}
\begin{equation}\label{RIPSimpleT2}
(1-\dt)R(V_0\setminus \La^l)\le\left\|\Phi\left( x|_{V_0\setminus
\La^l}\right)\right\|^2\le (1+\dt)R(V_0\setminus \La^l).
\end{equation}
The definition of OMP implies that
\begin{equation*}
\|\Phi z^l\|^2 = \|r^l\|^2\le \left\|\Phi\left( x|_{V_0\setminus
\La^l}\right)\right\|^2.
\end{equation*}
Therefore using (\ref{RIPSimpleT1}) and (\ref{RIPSimpleT2}) we
have
\begin{equation*}
(1-\dt)\sum_{i=1}^N (z^l_i)^2 \le  \|r^l\|^2 \le \left\|\Phi\left(
x|_{V_0\setminus \La^l}\right)\right\|^2\le (1+\dt)R(V_0\setminus
\La^l),
\end{equation*}
\begin{equation*}
(1-\dt)\left(\sum_{i\in\La^l} (z^l_i)^2 + \sum_{i\in V_0\setminus
\La^l} (z^l_i)^2\right) \le  (1+\dt)R(V_0\setminus \La^l).
\end{equation*}
\begin{equation*}
\sum_{i\in\La^l} (z^l_i)^2 + R(V_0\setminus \La^l) \le
\frac{1+\dt}{1-\dt}R(V_0\setminus \La^l).
\end{equation*}
\begin{equation*}
\sum_{i\in\La^l} (z^l_i)^2 \le (\frac{1+\dt}{1-\dt}
-1)R(V_0\setminus \La^l) \le 3\dt R(V_0\setminus \La^l).
\end{equation*}
This completes the proof of  (\ref{RIPsimple1}). From
(\ref{RIPSimpleT1}) it follows that
\begin{equation*}
R(V_0\setminus \La^l) =  \sum_{i\in V_0\setminus\La^l} (z^l_i)^2
\le \sum_{i=1}^N (z^l_i)^2 \le (1-\dt)^{-1}\|r^l\|^2\le
(1+2\dt)\|r^l\|^2.
\end{equation*}
\end{proof}

For increasing sequence  $0=l_0 < l_1 <\cdots < l_s$, $s\ge 1$, we
denote
\begin{equation}\label{VkRk-def}
V_k:= V_0\setminus\La^{l_k},\ R_k = R(V_k),\ 0\le k\le s.
\end{equation}

\begin{Lemm}\label{Lm-OGArate1}
Suppose that $l_k + K \le CK^{1.2}$, $1\le k\le s$. Then for
arbitrary $p\in \N$ we have
\begin{equation*}
\|r^{l_k+p}\|^2\le \frac{R_k}{p}\left(6\dt CK^{1.2}+2K\right).
\end{equation*}
\end{Lemm}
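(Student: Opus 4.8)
The plan is to re-run the Devore--Temlyakov argument of Theorem A, but \emph{restarted} at iteration $l_k$ with the current residual $r^{l_k}=\Phi z^{l_k}$ playing the role of the target. The gain is that Theorem A applied to $y=\Phi x$ only controls the decay through $|x|_1$, which may be large, whereas the representation $r^{l_k}=\Phi z^{l_k}$ has a small $\ell_1$-cost; it is precisely the size of $|z^{l_k}|_1$ that will produce the factor $R_k$. So first I would prove the sharp greedy estimate
\[
\|r^{l_k+p}\|^2\le \frac{|z^{l_k}|_1^2}{p},
\]
and then estimate $|z^{l_k}|_1$ using Lemma~\ref{Lm-RIPsimple}.

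For the first bound I would record the orthogonality identity that legitimizes the restart: for every $j\ge0$ the difference $r^{l_k}-r^{l_k+j}=\Phi(x^{l_k+j}-x^{l_k})$ lies in $\sp\{\phi_i:\ i\in\La^{l_k+j}\}$, while the least-squares step of OMP makes $r^{l_k+j}$ orthogonal to that span; hence $\<r^{l_k+j},r^{l_k}\>=\|r^{l_k+j}\|^2$. Writing $r^{l_k}=\sum_i z^{l_k}_i\phi_i$, this gives the correlation lower bound
\[
\|r^{l_k+j}\|^2=\<r^{l_k+j},r^{l_k}\>=\sum_i z^{l_k}_i\<r^{l_k+j},\phi_i\>\le |z^{l_k}|_1\,\max_i|\<r^{l_k+j},\phi_i\>|.
\]
On the other hand, because OMP picks a maximal-correlation atom and then orthogonalizes, a single step drops the squared residual by at least the squared maximal correlation, so
\[
\|r^{l_k+j}\|^2-\|r^{l_k+j+1}\|^2\ge \max_i|\<r^{l_k+j},\phi_i\>|^2\ge \frac{\|r^{l_k+j}\|^4}{|z^{l_k}|_1^2}.
\]
Setting $a_j=\|r^{l_k+j}\|^2$ and $B=|z^{l_k}|_1^2$, this is exactly the recursion $a_j-a_{j+1}\ge a_j^2/B$; dividing by $a_ja_{j+1}$ and telescoping gives $a_p^{-1}\ge p/B$, i.e. $\|r^{l_k+p}\|^2\le |z^{l_k}|_1^2/p$ (the case of a vanishing residual being trivial).

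It remains to bound $|z^{l_k}|_1$, where Lemma~\ref{Lm-RIPsimple} does the work. Since $\supp z^{l_k}\subset V_0\cup\La^{l_k}$ and $z^{l_k}_i=x_i$ on $V_k=V_0\setminus\La^{l_k}$, Cauchy--Schwarz with $\sharp V_k\le K$ gives $\sum_{i\in V_k}|x_i|\le (KR_k)^{1/2}$. For the indices in $\La^{l_k}$, Cauchy--Schwarz with $\sharp\La^{l_k}\le l_k\le CK^{1.2}$ together with (\ref{RIPsimple1}) gives $\sum_{i\in\La^{l_k}}|z^{l_k}_i|\le (CK^{1.2})^{1/2}(3\dt R_k)^{1/2}$. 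Adding the two and using the elementary inequality $(\sqrt a+\sqrt b)^2\le 2(a+b)$ yields $|z^{l_k}|_1^2\le R_k(2K+6\dt CK^{1.2})$, which combined with $\|r^{l_k+p}\|^2\le |z^{l_k}|_1^2/p$ is exactly the claimed estimate. The one genuinely new point, and the step I would be most careful about, is the orthogonality identity $\<r^{l_k+j},r^{l_k}\>=\|r^{l_k+j}\|^2$: it is what allows the $1/p$ greedy recursion to be run against $r^{l_k}$ rather than $y$, and hence what replaces $|x|_1$ by the much smaller $|z^{l_k}|_1$; everything after that is the standard recursion plus the quantitative control of the $\La^{l_k}$-part of $z^{l_k}$ furnished by (\ref{RIPsimple1}).
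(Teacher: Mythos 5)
Your proposal is correct and follows essentially the same route as the paper: the key inequality $\|r^{l_k+p}\|^2\le |z^{l_k}|_1^2/p$ followed by the Cauchy--Schwarz splitting of $|z^{l_k}|_1$ over $V_k$ and $\La^{l_k}$ using (\ref{RIPsimple1}) is exactly the paper's argument. The only difference is that the paper obtains the first inequality by directly invoking Theorem~A for the representation $r^{l_k}=\Phi z^{l_k}$, whereas you re-derive it via the orthogonality identity $\<r^{l_k+j},r^{l_k}\>=\|r^{l_k+j}\|^2$ and the standard greedy recursion --- a worthwhile piece of extra care, since it justifies applying the DeVore--Temlyakov bound to a restarted run in which $\La^{l_k}$ is already nonempty.
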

\begin{proof}
Since $r^{l_k}=\Phi z^{l_k}$ we estimate by Theorem A.
\begin{equation}\label{fromThA}
\|r^{l_k+p}\|^2\le \frac{|z^{l_k}|_1^2}{p}.
\end{equation}
So to prove the lemma it's sufficient to estimate
\begin{equation*}
|z^{l_k}|_1^2=\left(\sum_{i = 1}^N |z^{l_k}_i|\right)^2 =
\left(\sum_{i\in V_0\cup \La^{l_k}} |z^{l_k}_i|\right)^2\le
2\left(\left(\sum_{i\in V_0\setminus\La^{l_k}}
|z^{l_k}_i|\right)^2 + \left(\sum_{i\in\La^{l_k}}
|z^{l_k}_i|\right)^2\right).
\end{equation*}
Applying (\ref{VkRk-def}) and (\ref{V0}) we have
\begin{equation}\label{OGArate1T1}
\left(\sum_{i\in V_0\setminus\La^{l_k}} |z^{l_k}_i|\right)^2 =
\left(\sum_{i\in V_0\setminus\La^{l_k}} |x_i|\right)^2 =
\left(\sum_{i\in V_k} |x_i|\right)^2 \le \sharp V_k \sum_{i\in
V_k} |x_i|^2 \le \sharp V_0 R_k\le R_kK.
\end{equation}
Using (\ref{RIPsimple1}) from Lemma~\ref{Lm-RIPsimple} we get
\begin{equation*}
\left(\sum_{i\in\La^{l_k}} |z^{l_k}_i|\right)^2\le \sharp\La^{l_k}
\sum_{i\in\La^{l_k}} (z^{l_k}_i)^2 = l_k \sum_{i\in\La^{l_k}}
(z^{l_k}_i)^2\le CK^{1.2} 3\dt R(V_0\setminus \La^{l_k})\le
CK^{1.2} 3\dt R_k.
\end{equation*}
Combining with (\ref{OGArate1T1}) we obtain the desirable
inequality
\begin{equation*}
|z^{l_k}|_1^2 \le R_k(6\dt CK^{1.2}+2K).
\end{equation*}
This together with (\ref{fromThA}) completes the proof of the
lemma.
\end{proof}

\begin{Lemm}\label{Lm-OGArate2}
Let $1\le p\le K^{0.8}$ and $l_k+2p\le CK^{1.2}$, $1\le k\le s$.
Then for any $W\subset V_k$ such that $\sharp W = p$ we have
\begin{equation*}
R(V_k\setminus\La^{l_k+2p})\le 10 R(V_k\setminus W)+ 30\dt R_k.
\end{equation*}
\end{Lemm}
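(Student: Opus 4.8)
The plan is to reduce the assertion to a bound on the residual norm $\|r^{l_k+2p}\|$ and then to control that residual by Theorem~B applied to the current state of the algorithm. First I would observe that, since the sets $\La^l$ are nested and $\La^{l_k}\subset\La^{l_k+2p}$, one has $V_k\setminus\La^{l_k+2p}=(V_0\setminus\La^{l_k})\setminus\La^{l_k+2p}=V_0\setminus\La^{l_k+2p}$, so that $R(V_k\setminus\La^{l_k+2p})=R(V_0\setminus\La^{l_k+2p})$. Applying (\ref{RIPsimple2}) of Lemma~\ref{Lm-RIPsimple} at step $l_k+2p$ then gives $R(V_k\setminus\La^{l_k+2p})\le(1+2\dt)\|r^{l_k+2p}\|^2$. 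Thus it suffices to prove
\begin{equation*}
\|r^{l_k+2p}\|^2\le 9(1+\dt)\big(R(V_k\setminus W)+3\dt R_k\big),
\end{equation*}
since multiplying by $1+2\dt$ and using $\dt=cK^{-0.2}$ collapses the prefactors to the asserted constants, because $9(1+2\dt)(1+\dt)\le10$ and $27(1+2\dt)(1+\dt)\le30$ with ample room.

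Next I would control $\|r^{l_k+2p}\|$ by viewing the $2p$ iterations following step $l_k$ as Orthogonal Matching Pursuit applied to the current residual $r^{l_k}=\Phi z^{l_k}$. Because $p\le K^{0.8}\le 1/(20\mu(\Phi))$, Theorem~B applies to this segment and yields $\|r^{l_k+2p}\|\le 3\s_p(r^{l_k},\Phi)$, where $\s_p(r^{l_k},\Phi)=\inf_{\sharp\supp u\le p}\|r^{l_k}-\Phi u\|$ is the best $p$-term approximation of $r^{l_k}$ in the dictionary $\{\phi_i\}$. This restart of Theorem~B is the heart of the argument: its legitimacy rests on the fact that $r^{l_k}$ is orthogonal to every atom already chosen, so the subsequent greedy selections and least-squares projections act on $r^{l_k}$ as they would on a fresh input, while the additional projection onto $\sp\{\phi_i:i\in\La^{l_k}\}$ maintained in the continuation can only decrease the residual.

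To finish I would estimate $\s_p(r^{l_k},\Phi)$ from above by a convenient competitor. Take $u$ supported on the given set $W\subset V_k$ with $u_i=x_i$ for $i\in W$; then $r^{l_k}-\Phi u=\Phi(z^{l_k}-u)$, and the vector $z^{l_k}-u$ is supported on $(V_k\setminus W)\cup\La^{l_k}$, equalling $x_i$ on $V_k\setminus W$ and $z^{l_k}_i$ on $\La^{l_k}$. Its support has size at most $K+l_k\le CK^{1.2}$, so the RIP together with (\ref{rlPhizl}) gives
\begin{equation*}
\s_p(r^{l_k},\Phi)^2\le(1+\dt)\|z^{l_k}-u\|^2=(1+\dt)\Big(R(V_k\setminus W)+\sum_{i\in\La^{l_k}}(z^{l_k}_i)^2\Big).
\end{equation*}
By (\ref{RIPsimple1}) of Lemma~\ref{Lm-RIPsimple} the last sum is at most $3\dt R(V_0\setminus\La^{l_k})=3\dt R_k$, whence $\s_p(r^{l_k},\Phi)^2\le(1+\dt)\big(R(V_k\setminus W)+3\dt R_k\big)$. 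Combining this with $\|r^{l_k+2p}\|\le 3\s_p(r^{l_k},\Phi)$ yields the target inequality of the first paragraph, and the reduction completes the proof.

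I expect the genuine obstacle to be the justification of the restarted form of Theorem~B: one must be certain that the bound $\|r^{2l}\|\le 3\s_l$ may be applied to the tail of the run, begun at the current residual $r^{l_k}$, rather than only to a run started at $l=0$. Verifying that the already-selected atoms do not interfere — concretely, that the continuation residuals are dominated by those of the algorithm launched afresh on $r^{l_k}$, which is where the coherence bound $p\le1/(20\mu(\Phi))$ enters — is where the care is required. By contrast the RIP manipulations, the verification of the index conditions $K+l_k\le CK^{1.2}$ and $K+l_k+2p\le CK^{1.2}$ needed to invoke the RIP and Lemma~\ref{Lm-RIPsimple}, and the constant bookkeeping (for which the large value $C=2\times10^5$ and the tiny $\dt$ leave generous slack) are routine.
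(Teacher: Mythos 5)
Your proposal is correct and follows essentially the same route as the paper: bound $R(V_0\setminus\La^{l_k+2p})$ by $(1+2\dt)\|r^{l_k+2p}\|^2$ via (\ref{RIPsimple2}), invoke Theorem~B restarted at $r^{l_k}$ to get $\|r^{l_k+2p}\|\le 3\s_p(r^{l_k})$, and estimate $\s_p(r^{l_k})$ by the competitor $\Phi(x|_W)$ using the RIP together with (\ref{RIPsimple1}); the constants collapse exactly as you indicate. The paper presents the same steps in the reverse order and applies Theorem~B to the tail of the run without further comment, so your flagged concern about the restart is the only point where you are, if anything, more careful than the original.
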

\begin{proof}
According to RIP, (\ref{rlPhizl}), (\ref{RIPsimple1}) and
(\ref{VkRk-def}) we estimate
\begin{multline*}
\left(\s_p(r^{l_k})\right)^2\le \|r^{l_k}-\Phi(x|_W)\|^2 =
\|\Phi(z^{l_k})-\Phi(z^{l_k}|_W)\|^2 = \\
=\|\Phi(z^{l_k}- z^{l_k}|_W)\|^2 \le (1+\dt)\sum_{1\le i\le N,\
i\not\in
W}(z^{l_k}_i)^2 \le\\
\le (1+\dt)\left(\sum_{i\in V_k\setminus W}(z^{l_k}_i)^2 +
\sum_{1\le i\le N,\ i\not\in V_k}(z^{l_k}_i)^2\right)\le
(1+\dt)\left(\sum_{i\in V_k\setminus W}(x_i)^2 + \sum_{i\in
\La^{l_k}}(z^{l_k}_i)^2\right)\le\\
\le (1+\dt)(R(V_k\setminus W) + 3\dt R(V_0\setminus\La^{l_k})) \le
(1+\dt)(R(V_k\setminus W) + 3\dt R_k).
\end{multline*}
Since
\begin{equation*}
p\le K^{0.8}\le 1/(20\mu(\Phi))
\end{equation*}
we can apply Theorem~B and get
\begin{equation*}
\|r^{l^k+2p}\|\le 3 \s_p(r^{l_k}).
\end{equation*}
Using (\ref{RIPsimple2}) from Lemma~\ref{Lm-RIPsimple} we obtain
\begin{multline*}
R(V_k\setminus\La^{l_k+2p}) = R(V_0\setminus \La^{l_k + 2p}) \le
(1+2\dt)\|r^{l_k + 2p}\|^2\le(1+2\dt)9\left(\s_p(r^{l_k})\right)^2\le\\
\le (1+2\dt)9 (1+\dt)(R(V_k\setminus W) + 3\dt R_k)\le 10\left(
R(V_k\setminus W)+ 3\dt R_k\right).
\end{multline*}
\end{proof}

\section{Proof of Theorem~\ref{Th-RIP}.}

We prove by induction on $k$ that if $l_k + K \le CK^{1.2}$ and
$V_k\ne\emptyset$, then we can define $l_{k+1} > l_k$,  $V_{k+1}$
and $R_{k+1}$ satisfying (\ref{VkRk-def}) such that
\begin{equation}\label{CheckBelow}
l_{k+1} + K \le CK^{1.2},
\end{equation}
 and at least one of the following
statements hold
\begin{equation}\label{caseA}
\text{(A) }\quad l_{k+1}-l_k \le 3\times 10^{4} (6\dt
CK^{1.2}+2K),\quad \sharp(V_k\setminus V_{k+1}) \ge K^{0.8};
\end{equation}
\begin{equation}\label{caseB}
\text{(B) }\quad l_{k+1}-l_k \le 2 K^{0.8},\quad
\sharp(V_k\setminus V_{k+1}) \ge K^{0.6};
\end{equation}
 \begin{equation}\label{caseC}
\text{(C) }\quad l_{k+1}-l_k \le 2K^{0.6},\quad
\sharp(V_k\setminus V_{k+1}) \ge K^{0.4};
\end{equation}
\begin{equation}\label{caseD}
\text{(D) }\quad l_{k+1}-l_k \le 2K^{0.4},\quad
\sharp(V_k\setminus V_{k+1}) \ge K^{0.2};
\end{equation}
\begin{equation}\label{caseE}
\text{(E) }\quad l_{k+1}-l_k \le 2K^{0.2},\quad
\sharp(V_k\setminus V_{k+1}) \ge 1.
\end{equation}

Set
\begin{equation*}
p_A = [3\times 10^{4} (6\dt CK^{1.2}+2K)],\quad W_A = V_k \cap
\La^{l_k+p_A}.
\end{equation*}
If $\sharp W_A\ge K^{0.8}$ we define
\begin{equation*}
l_{k+1}:=l_k+p_A,\quad V_{k+1} := V_0\setminus \La^{l_{k+1}} =
V_k\setminus W_A,\quad R_{k+1}:=R(V_{k+1}).
\end{equation*}
Then $\sharp(V_k\setminus V_{k+1}) = \sharp W_A$ and
statement~(\ref{caseA}) holds. The inequality
\begin{equation*}
l_k+ p_A + K \le CK^{1.2}
\end{equation*}
will be checked below. Assume that
\begin{equation}\label{WA}
  \sharp W_A < K^{0.8}
  \end{equation}
 Applying Lemma~\ref{Lm-OGArate1}
we have
\begin{equation*}
\|r^{l_{k}+p_A}\|^2\le \frac{R_k(6\dt CK^{1.2}+2K)} {[3\times
10^{4} (6\dt CK^{1.2}+2K)]}\le \frac{R_k} {2.5\times 10^{4}}.
\end{equation*}
Using (\ref{RIPsimple2}) from Lemma~\ref{Lm-RIPsimple} we estimate
\begin{equation}\label{RVkWA}
R(V_k\setminus W_A) = R(V_0\setminus \La^{l_{k}+p_A}) \le
 (1+2\dt)\|r^{l_{k}+p_A}\|^2\le (1+2\dt)\frac{R_k} {2.5\times 10^{4}}\le \frac{R_k} {2\times 10^4}.
\end{equation}
Set
\begin{equation*}
 p_B:=\sharp W_A,\quad  W_B = V_k \cap \La^{l_k +2p_B}.
\end{equation*}
If $\sharp W_B\ge K^{0.6}$ we set
\begin{equation*}
l_{k+1}=l_k+2p_B,\quad V_{k+1}=
V_k\setminus\La^{l_{k+1}}=V_k\setminus W_B,\quad
R_{k+1}=R(V_{k+1}).
\end{equation*}
Then $\sharp(V_k\setminus V_{k+1}) = \sharp W_B\ge K^{0.6}$ and
taking into account (\ref{WA}) we obtain (\ref{caseB}). The
inequality
\begin{equation*}
l_k+2p_B+K\le CK^{1.2}
\end{equation*}
will be checked below.

Assume that
\begin{equation}\label{WB}
  \sharp W_B < K^{0.6}.
  \end{equation}
Applying Lemma~\ref{Lm-OGArate2} for $W=W_A$ and $p=p_B$, and
inequality (\ref{RVkWA}) we get
\begin{equation}\label{RVkWB}
R(V_k\setminus W_B) = R(V_k\setminus\La^{l_k+2p_B})\le 10
R(V_k\setminus W_A)+ 30\dt R_k\le \frac{R_k} {2\times 10^3} +
30\dt R_k.
\end{equation}

We repeat these calculations three more times.

Set
\begin{equation*}
p_C:=\sharp W_B,\quad W_C = V_k \cap \La^{l_k +2p_C}.
\end{equation*}
If $\sharp W_C\ge K^{0.4}$ we set
\begin{equation*}
l_{k+1}=l_k+2p_C,\quad
V_{k+1}=V_k\setminus\La^{l_{k+1}}=V_k\setminus W_C,\quad
R_{k+1}=R(V_{k+1}).
\end{equation*}
Then $\sharp(V_k\setminus V_{k+1}) = \sharp W_C\ge K^{0.4}$ and
taking into account (\ref{WB}) we obtain (\ref{caseC}). The
inequality
\begin{equation*}
l_k+2p_C+K\le CK^{1.2}
\end{equation*}
will be checked below.

Assume that
\begin{equation}\label{WC}
  \sharp W_C < K^{0.4}.
  \end{equation}
Applying Lemma~\ref{Lm-OGArate2} for $W=W_B$ and $p=p_C$, and
inequality (\ref{RVkWB}) we get
\begin{equation}\label{RVkWC}
R(V_k\setminus W_C) = R(V_k\setminus\La^{l_k+2p_C})\le 10
R(V_k\setminus W_B)+ 30\dt R_k\le \frac{R_k} {2\times 10^2} +
3(10^2+10)\dt R_k.
\end{equation}

Set
\begin{equation*}
p_D:=\sharp W_C,\quad W_D = V_k \cap \La^{l_k +2p_D}.
\end{equation*}
If $\sharp W_D\ge K^{0.2}$ we set
\begin{equation*}
l_{k+1}=l_k+2p_D,\quad
V_{k+1}=V_k\setminus\La^{l_{k+1}}=V_k\setminus W_D,\quad
R_{k+1}=R(V_{k+1}).
\end{equation*}
Then $\sharp(V_k\setminus V_{k+1}) = \sharp W_D\ge K^{0.2}$ and
taking into account (\ref{WC}) we obtain (\ref{caseD}). The
inequality
\begin{equation*}
l_k+2p_D+K\le CK^{1.2}
\end{equation*}
will be checked below.

Assume that
\begin{equation}\label{WD}
  \sharp W_D < K^{0.2}.
  \end{equation}
Applying Lemma~\ref{Lm-OGArate2} for $W=W_C$ and $p=p_D$, and
inequality (\ref{RVkWC}) we get
\begin{equation}\label{RVkWD}
R(V_k\setminus W_D) = R(V_k\setminus\La^{l_k+2p_D})\le 10
R(V_k\setminus W_C)+ 30\dt R_k\le \frac{R_k} {20} + 3(10^3 + 10^2
+ 10)\dt R_k.
\end{equation}

Set
\begin{equation*}
p_E:=\sharp W_D,\quad W_E = V_k \cap \La^{l_k +2p_E},
\end{equation*}
\begin{equation*}
l_{k+1}=l_k+2p_E,\quad
V_{k+1}=V_k\setminus\La^{l_{k+1}}=V_k\setminus W_E,\quad
R_{k+1}=R(V_{k+1}).
\end{equation*}
Taking into account (\ref{WD}) we get
\begin{equation*}
l_{k+1}- l_k \le 2K^{0.2}.
\end{equation*}
The inequality
\begin{equation*}
l_k+2p_E+K\le CK^{1.2}
\end{equation*}
will be checked below.

Applying Lemma~\ref{Lm-OGArate2} for $W=W_D$ and $p=p_E$, and
inequalities (\ref{RVkWD}) and (\ref{Cc}) we have
\begin{multline*}
R_{k+1} = R(V_k\setminus W_E) = R(V_k\setminus\La^{l_k+2p_E})\le
10 R(V_k\setminus W_D)+ 30\dt R_k\le\\
\le \frac{R_k} {2} + 3(10^4 + 10^3 + 10^2 + 10)\dt R_k < R_k(1/2 +
4\times 10^4\dt) < R_k.
\end{multline*}
Therefore $W_E\ne\emptyset$ and $\sharp(V_k\setminus V_{k+1})\ge
1$ and statement (\ref{caseE}) holds.

Thus to complete the proof of induction assumption it remains to
estimate
\begin{equation*}
l_k+p_A + K,\ l_k+2p_B + K,\ l_k+2p_B + K,\ l_k+2p_C + K,\
l_k+2p_D + K.
\end{equation*}
It's clear that the biggest of these numbers is the first one.

Using induction assumption, inclusion $V_0\supset
V_1\supset\cdots\supset V_k$, and the equality $\sharp V_0 = K$ we
claim that statement (\ref{caseA}) (case A) can be fulfilled not
more than $K^{0.2} = K/K^{0.8}$ times, statement (\ref{caseB})
(case B) not more than $K^{0.4} = K/K^{0.6}$ times, statement
(\ref{caseC}) (case C) not more than $K^{0.6} = K/K^{0.4}$ times,
statement (\ref{caseD}) (case D) not more than $K^{0.8} =
K/K^{0.2}$ times, and statement (\ref{caseC}) (case E) not more
than $K$ times. Hence, using (\ref{Cc}), we obtain
\begin{multline*}
l_k \le 3\times 10^4 (6\dt CK^{1.2}+2K)K^{0.2} + 2K^{0.8}K^{0.4} +
2K^{0.6}K^{0.6} + 2K^{0.4}K^{0.8} + 2K^{0.2}K = \\
=(18\times 10^4 \dt C K^{0.2} + 6 \times 10^4 + 8)K^{1.2},
\end{multline*}
\begin{multline*}
l_k+p_A + K \le (18\times 10^4 \dt C K^{0.2} + 6 \times 10^4 +
8)K^{1.2} + 3\times 10^4 (6\dt CK^{1.2}+2K) + K\le\\
\le (36\times 10^4 \dt C K^{0.2} + 12 \times 10^4 + 9)K^{1.2}\le\\
\le
(36\times 10^4 \times 10^{-6}K^{-0.2} 2\times 10^5 K^{0.2} + 12
\times 10^4 + 9)K^{1.2} < 20\times 10^4K^{1.2} = CK^{1.2}.
\end{multline*}

Since $\sharp(V_k\setminus V_{k+1}) \ge 1$ there exists $s\in\N$
such that $V_s=\emptyset$ and $l_s+K\le CK^{1.2}$. Therefore by
(\ref{VkRk-def}) and (\ref{rlPhizl}) we have
\begin{equation*}
V_0\subset \La^{l_s}
\end{equation*}
and
\begin{equation*}
r^{l_s} = \Phi z^{l_s} = \Phi(x-x^{l_s}) = 0.
\end{equation*}
Using RIP we finally obtain that
\begin{equation*}
x = x^{l_s}.\quad \square
\end{equation*}

\begin{Remar}
We guess that constant $3$ in Theorem B is not optimal and hence
constants $C$ and $c^{-1}$ from (\ref{Cc}) can be reduced.
\end{Remar}

The author thanks professor V.N.~Temlyakov and professor
S.V.~Konyagin for useful discussions.

\end{document}